\newtheorem{thm}{Theorem}[section]
\theoremstyle{definition}
\newtheorem{ex}[thm]{Example}
\newtheorem{lem}[thm]{Lemma}
\newtheorem{prop}[thm]{Proposition}
\theoremstyle{definition}
\newtheorem{defn}[thm]{Definition}
\numberwithin{equation}{section}
\newtheorem{claim}[thm]{Claim}
\def\P{\mathbb{P}} 
\def\Z{\mathbb{Z}} 
\def\C{\mathbb{C}} 
\def\O{\mathcal{O}} 
\def\F{\mathcal{F}}
\DeclareMathOperator{\PGL}{PGL}
\DeclareMathOperator{\SL}{SL}
\definecolor{miazul}{rgb}{0.60,0.75,0.90}
\definecolor{verdeazul}{rgb}{0.30,0.80,0.30}
\begin{document}     
\title[Stability of foliations]
{On the stability of foliations of degree $3$ with a unique singular point} 
\author[Abel Castorena ]{Abel Castorena }
\address{Abel Castorena Mart\'inez \\
 Centro de Ciencias  Matem\'aticas\\
 Antigua Carretera a P\'atzcuaro \# 8701\\
 Col. Ex Hacienda San Jos\'e de la Huerta\\
 Morelia, Michoac\'an, M\'exico\\
  C.P. 58089.}
\email{abel@matmor.unam.mx}
\author[ Rubí Pantale\'on]{P.~Rub\'i Pantale\'on-Mondrag\'on}
\address{P.~Rub\'i Pantale\'on-Mondrag\'on\\
  Centro de Ciencias  Matem\'aticas\\
 Antigua Carretera a P\'atzcuaro \# 8701\\
  Col. Ex Hacienda San Jos\'e de la Huerta\\
  Morelia, Michoac\'an, M\'exico\\
  C.P. 58089.
 }
\email{petra@matmor.unam.mx}
\author[Juan Vásquez]{Juan Vásquez Aquino}
\address{Juan Vásquez Aquino\\
Unidad académica de matemáticas, Universidad Autónoma de Zacatecas\\
Paseo la Bufa, Av. Solidaridad, Zacatecas, Zacatecas, México\\ C.P. 98060.}
\email{juan.vasquez@cimat.mx}

\thanks{The first author is supported with Grants A1-S-9029, "Moduli
  de curvas y Curvatura en $A_g$" from CONACyT, and PAPIIT IN100723 "Curvas, sistemas lineales en superficies proyectivas y fibrados vectoriales" from DGAPA, UNAM. The second and third authors are supported with a Posdoctoral Fellowship from CONACyT}
\keywords{Holomorphic foliation, Stability, Singular point, Geometric Invariant Theory, Multiplicity}
\begin{abstract}
Applying Geometric Invariant Theory (GIT), we study the stability of foliations of degree $3$ on $\P^{2}$ with a unique singular point of multiplicity $1,2$, or $3$ and Milnor number $13$. In particular, we characterize those foliations for multiplicity 2 in three cases: stable, strictly semistable, and unstable.
\end{abstract}

\maketitle

\section{Introduction}

In the last decades, the classification of  holomorphic foliations on $\P^2$ of degree $d$ has been of great interest  in Algebraic Geometry. By \cite{CO, GM-K} we know that for $d\geq 2$ a holomorphic foliation with isolated singularities is uniquely determined by its singular scheme.
The case where all singularities are different was studied by \cite{GM-K}, the situation becomes more complicated when we have repeated singular points. In this work, we are interested in those foliations whose singular scheme consists exactly of one point. 

The Geometric Invariant Theory (GIT) introduced by David Mumford, states that given a linear action of a reductive group on a projective variety, it is possible to construct a good quotient if we consider the restricted action on the open set of semistable points by eliminating a closed subset consisting of unstable points of the action. Frances Kirwan shows that it is possible to construct a stratification of the variety by non-singular locally closed subvarieties such that, the unique open stratum is the open subset of semistable points, thus the other strata consist of unstable points. An important problem is to determine the stability of the points of the variety.

The space of foliations of degree $d$ on the complex projective plane $\P^{2}$, which we denote by $\F_{d}$, is a projective space and we define the linear action by change of coordinates of the automorphisms group $\PGL_3(\C)$ of $\P^2$. It is known that the study of singularities, invariant lines, and automorphism groups of foliations can be done up to this action. Moreover, since there is an isogeny between $\PGL_3(\C)$ and $\SL_3(\C)$ we can consider the linear action of $\SL_3(\C)$ instead of $\PGL_3(\C)$, and we know more properties and representations of this reductive group. Naturally, we are interested in the information that we can obtain about the  GIT quotient $\F_d//\SL_3(\C)$. In most cases, this quotient variety is very singular, the worst singularities come from the semistable points whose stabilizers are of positive dimension. What can we say about the stability of the foliations with a unique singular point?  In \cite{AlcantaraGrado1} the GIT quotient of foliations of  degree one on $\P^2$ is constructed showing that $\F_1//\SL_3(\C)\cong \P^1$. The geometric invariant theory of $\F_2$ is studied in \cite{AlcantaraGrado2} and stratification is constructed in the sense of Kirwan. In \cite{CR16} the authors give the stratification of $\F_{3}$ and characterize the foliations in the unstable strata, such stratification is basically constructed by using the diagram of weights of the $\SL_3(\C)$-representation on $\F_3$. In that stratification appear unstable foliations with a unique singular point, although not all of them are explicitly described.

An important invariant  on the foliations  which allows us to characterize them is the multiplicity of the singular point, for example when the multiplicity is one, the local representation of the foliation is well known and determines the singularity type \cite{Brunella}. However, finding examples with specific properties, including when the multiplicity is one is in general very difficult. 

In this work we focus on foliations in $\F_{3}$ with a unique singular point, we determine its type of stability concerning for  to the multiplicity of the singular point, and we give some explicit examples for the stable, semistable non-stable and unstable foliations with a unique singular point. 

The structure of the paper is the following. In Section \ref{Preliminares} we give basic concepts for which this work has developed. In Section \ref{Estabilidad}, we give known results for degree $d\leq 3$. In Section \ref{Results} we give the main results of this work,  and we describe the stability of foliation of degree $3$ with a unique singular point of multiplicity two. We give also conditions on  the defining polynomials of the foliation such that we have a stable, strictly semistable, or unstable foliation. 

\section{Preliminaries}\label{Preliminares}

\subsection{Foliations}

In this section, we give general results of holomorphic
foliations by curves on the complex projective plane. 

Up to nonzero scalar multiplication, a degree $d$ foliation on $\P^{2}$ is defined by a homogeneous vector field 
\[X=P(x,y,z)\frac{\partial}{\partial
       x}+Q(x,y,z)\frac{\partial}{\partial
       y}+R(x,y,z)\frac{\partial}{\partial z},\]

where $P(x,y,z),Q(x,y,z)$ and $R(x,y,z)\in\C[x,y,z]$ are homogeneous polynomials of degree
$d$. Moreover,  for every homogeneous polynomial $G(x,y,z)\in \C[x,y,z]$ of
degree $d-1$, both foliations $X$ and
$X+G(x,y,z)\cdot(x\frac{\partial }{\partial
  x}+y\frac{\partial}{\partial y}+z\frac{\partial}{\partial z})$ define 
the same foliation. We say that the foliations  are equivalent.

Let $\F_{d}$ be the space of foliations on $\P^{2}$ of degree $d$. It is well known that $\F_{d}$ is a
projective space of dimension $d^{2}+4d+2$ (see \cite{GO}). 

\begin{defn}
  A point $p\in \P^{2}$ is a {\bf singular point} of the foliation $X$  if
  $X(p)=\lambda p$ for some $\lambda\in\C$. We denoted by $Sing(X)$ the set of singular points of the foliation $X$.
\end{defn}

If the polynomials that define the foliation have no common factors,
then the foliation has isolated singularities. For a degree greater than one the foliation is defined uniquely by its singular scheme (see \cite{CO}). For the  purposes of this work, we will assume that the  foliations have isolated singularities. Note that we can see the singular locus of a foliation $X$ on $\P^2$ as a subset in $\C^2$ by taking a local representation of $X$ on the open set $U_{0}\subset \P^{2}$ with the vector field \[f(y,z)\frac{\partial}{\partial y}+g(y,z)\frac{\partial}{\partial z}\] where $f(y,z):=Q(1,y,z)-yP(1,y,z)$ and $g(y,z):=R(1,y,z)-zP(1,y,z)$. 

It is easy to see that there is a correspondence between the points of $Sing(X)$ and the points of the variety $V(\langle f,g\rangle)$. Moreover, with this representation, we have two important invariants for a singular point $p$: the Milnor number of $p$ and the multiplicity of $p$, and these invariants are defined as follows:

\begin{defn} Let $p$ be a singular point of a foliation $X$, up to a change of coordinates $p=[1:0:0]$.  
  The {\bf Milnor number} of $p$ is defined by
  \[\mu_{p}(X):=\dim_{\C}\frac{\O_{\C^{2},(0,0)}}{\langle f,g\rangle \cdot\O_{\C^{2},(0,0)}},\]
  where $\O_{\C^{2},(0,0)}$ is the ring of regular functions.

If we write  $f=f_{m}+f_{m+1}+\cdots$, and $g=g_{r}+g_{r+1}+\cdots ,$ as their decomposition into forms, where $f_{m}$ and $g_{r}$ are the forms of lowest degree in $f$ and $g$ respectively, $m,r \in\Z_{\geq 0}$,
the {\bf multiplicity} of $p$ is defined by
     \[ m_{p}(X)=min\{m,r\}.\]
\end{defn}

We can see that the Milnor number of $p$ is the intersection index of the algebraic curves $f$ and $g$ at  the origin, which we will denote by $I_{(0,0)}(f,g)$. If there is no confusion, we denote $I_{(0,0)}(f,g)$ by $I_{0}(f,g)$. 

An important and well known result for foliations with isolated singularities is the following:

\begin{prop}\cite{J06} Let $X$ be a foliation on $\P^{2}$ of degree $d$ with
  isolated singularities
  then \[d^{2}+d+1=\sum_{p\in\P^{2}}\mu_{p}(X).\]
\end{prop}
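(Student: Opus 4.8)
The plan is to interpret a degree $d$ foliation as a global holomorphic section of the twisted tangent bundle $E := T\P^2 \otimes \O(d-1)$ and to recover the sum of Milnor numbers as the top Chern number of $E$. First I would recall the Euler sequence $0 \to \O \to \O(1)^{\oplus 3} \to T\P^2 \to 0$; twisting by $\O(d-1)$ yields $0 \to \O(d-1) \to \O(d)^{\oplus 3} \to E \to 0$. A vector field $X = P\frac{\partial}{\partial x} + Q\frac{\partial}{\partial y} + R\frac{\partial}{\partial z}$ with $P,Q,R$ homogeneous of degree $d$ determines a section of $\O(d)^{\oplus 3}$, and its image $s_X \in H^0(\P^2, E)$ is unchanged precisely when we add a multiple $G\cdot(x\frac{\partial}{\partial x}+y\frac{\partial}{\partial y}+z\frac{\partial}{\partial z})$ of the radial field with $\deg G = d-1$, since this is exactly the subsheaf $\O(d-1)$. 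This recovers the equivalence of foliations stated earlier, so $\F_d = \P H^0(\P^2,E)$; a quick cohomology count gives $h^0(E) = (d+1)(d+3) = d^2+4d+3$, consistent with $\dim \F_d = d^2+4d+2$. Under this identification the singular points of $X$ are exactly the zeros of $s_X$.

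Second, I would check that at each singular point $p$ the local multiplicity of the section $s_X$ agrees with the Milnor number. Working in the chart $U_0$ and trivializing $E$ near $p$, the section is represented by the pair $(f,g)$ with $f = Q(1,y,z)-yP(1,y,z)$ and $g = R(1,y,z)-zP(1,y,z)$, so the local index of $s_X$ at $p$ equals $\dim_\C \O_{\C^2,p}/\langle f,g\rangle = I_p(f,g) = \mu_p(X)$ by the definitions given above.

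Third, since the singularities are isolated the zeros of $s_X$ are isolated, and the Poincaré–Hopf theorem for holomorphic sections of a rank $2$ bundle over the compact complex surface $\P^2$ gives
\[
\sum_{p \in \P^2} \mu_p(X) = \sum_{p\in Z(s_X)} \mathrm{index}_p(s_X) = \int_{\P^2} c_2(E).
\]
It then remains to compute $c_2(E)$. Using $c(T\P^2) = 1 + 3H + 3H^2$ with $H^2 = [\mathrm{pt}]$ and the twisting formula $c_2(T\P^2 \otimes \O(d-1)) = c_2(T\P^2) + c_1(T\P^2)\cdot(d-1)H + ((d-1)H)^2$, I obtain $c_2(E) = 3 + 3(d-1) + (d-1)^2 = d^2 + d + 1$, which is the asserted value.

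The Chern-class arithmetic is routine; the step requiring the most care is the second one, namely verifying that the local analytic index of $s_X$ at an isolated zero coincides with the intersection multiplicity $I_p(f,g)$ defining $\mu_p(X)$, together with the justification that for isolated zeros these local indices sum exactly to $\int_{\P^2} c_2(E)$ rather than merely bounding it. Once the bundle-theoretic dictionary between foliations and sections of $E$ is set up cleanly, the remaining computations follow.
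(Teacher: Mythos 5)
Your proof is correct. Note first that the paper itself offers no argument for this proposition: it is quoted verbatim from the reference [J06], so there is no ``paper proof'' to compare against line by line. Your argument is, however, exactly the classical one found in the cited literature (Jouanolou, and also Brunella's book): identify $\F_d$ with $\P H^0(\P^2, T\P^2\otimes\O(d-1))$ via the Euler sequence, observe that the equivalence ``$X \sim X + G\cdot(x\frac{\partial}{\partial x}+y\frac{\partial}{\partial y}+z\frac{\partial}{\partial z})$'' is precisely quotienting by the subsheaf $\O(d-1)$, and then compute the top Chern class. All the individual steps check out: the dimension count $h^0(E)=(d+1)(d+3)$ agrees with $\dim\F_d=d^2+4d+2$ as stated in the paper; the local trivialization of $E$ over $U_0$ does produce the pair $(f,g)=(Q(1,y,z)-yP(1,y,z),\,R(1,y,z)-zP(1,y,z))$ used in the paper's definition of $\mu_p$, so the local index of $s_X$ (the dimension of the local algebra, which for a holomorphic map germ with isolated zero equals its local degree) is exactly $\mu_p(X)$; and the Chern arithmetic $c_2(T\P^2\otimes\O(d-1)) = 3+3(d-1)+(d-1)^2 = d^2+d+1$ is right. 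The only point worth stating with a precise reference rather than a name-check is the equality $\sum_p \mathrm{index}_p(s_X)=\int_{\P^2}c_2(E)$ for a holomorphic section with isolated zeros: this is the statement that the zero scheme of a regular section of a rank $2$ bundle on a smooth projective surface represents $c_2$, together with the identification of the length of the zero scheme at $p$ with $\dim_\C\O_{\C^2,p}/\langle f,g\rangle$; both are standard and make your second step rigorous, not merely an inequality.
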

This result implies that when $X$ is a foliation of degree $3$ with a unique singular point, then the Milnor number is $13$ or equivalently $I_{0}(f,g)=13$.

\subsection{Stratification of a projective variety}

We give a brief overview of Kirwan's techniques to construct a perfectly equivariant stratification of a projective variety $V$ acted on by a reductive group $G$, over the complex numbers, for more details see \cite[Part II]{Kirwan}.

\vskip2mm

Let $V\subset\P^{n}$ be a non-singular complex projective
variety. Consider a reductive group $G$ acting linearly on $V$.
\begin{defn}
  Let $x\in V$ and consider $\bar{x}(\neq 0)$ in the affine cone of
  $V$ such that $\bar{x}\in x$. Denote by $\O(\bar{x})$ the orbit of
  $\bar{x}$, then:
  \begin{enumerate}
  \item $x$ is {\bf unstable} if $0\in\overline{\O(\bar{x})}$. The set of
    unstable points is denoted by $V^{un}$,
  \item $x$ is {\bf semistable} if $0\notin \overline{\O(\bar{x})}$. The set
    of semistable points is denoted by $V^{ss}$,
  \item $x$ is {\bf stable} if it is semistable, the orbit of $x$, $\O(x)$
    is closed in $V^{ss}$ and $\dim \O(x)=\dim G$. The set of stable
    points is denoted by $V^{s}$.
  \end{enumerate}
We say that the point is strictly semistable if it is semistable non-stable.
\end{defn}

It is well known that a quotient variety for $V$ can not exists. Mumford states in \cite{Mumford} that if we consider the open subset of semistable points $V^{ss}=V\setminus V^{un}$ and we restrict the action of $G$ on $V^{ss}$, then there exists a good quotient of this action denoted by $V//G$. Neverteless, classify the points of $V$ according to their stability is an important problem. Kirwan shows that  the unstable points contains information about the quotient variety \cite{Kirwan}.

\begin{defn}
  A finite collection $\{S_{\beta}:\beta \in \mathcal{B}\}$ of subsets of $V$
  forms a {\bf stratification} of $V$, if $V$ is the disjoint union of the
  strata $\{S_{\beta}:\beta\in \mathcal{B}\}$ and there is a partial order
  $\succ$ on the indexing set $\mathcal{B}$ such that
  $\bar{S}_{\beta}\subset\bigcup_{\gamma \geq \beta} S_{\gamma}$
  for every $\beta\in \mathcal{B}$.
\end{defn}

The existence of a stratification is given by the following theorem:

\begin{thm}\cite[Theorem 13.5]{Kirwan} \label{Theorem:Kirwan}
Let $V$ be a non-singular projective
  variety with a linear action by a reductive group $G$. Then there
  exists a stratification $\{S_{\beta}: \beta\in \mathcal{B}\}$ of $V$ such that
  the unique open stratum is $V^{ss}$ and every stratum $S_{\beta}$ in
  the set of unstable points is non-singular locally-closed and
  isomorphic to $G\times_{P_{\beta}}Y^{ss}_{\beta}$, where
  $Y^{ss}_{\beta}$ is a non-singular locally-closed subvariety of $V$
  and $P_{\beta}$ is a parabolic subgroup of $G$.
\end{thm}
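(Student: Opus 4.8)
The plan is to follow Kirwan's construction, in which the strata arise as the stable manifolds of the gradient flow of the norm-square of a moment map, while the indexing set $\mathcal{B}$ and the local product structure are read off from associated one-parameter subgroups via the Hilbert--Mumford criterion. Since $V\subset\P^{n}$ carries a linear action of the reductive group $G$, I would first fix a maximal compact subgroup $K\subset G$ together with a $K$-invariant Fubini--Study Kähler metric, restricted to $V$. This produces a moment map $\mu\colon V\to\mathfrak{k}^{*}$, which I identify with $\mathfrak{k}$ using a fixed invariant inner product, and the central object becomes the smooth function $f=\|\mu\|^{2}\colon V\to\R$.

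The first main step is to analyze the critical set of $f$. Fixing a maximal torus $T\subset K$ and a positive Weyl chamber $\mathfrak{t}_{+}\subset\mathfrak{t}$, every critical value of $\mu$ can be moved into $\mathfrak{t}_{+}$ by the $K$-action, and I would show the resulting set $\mathcal{B}\subset\mathfrak{t}_{+}$ is finite: each $\beta\in\mathcal{B}$ is the point nearest the origin of the convex hull of some subset of the finitely many weights of the $G$-representation, so only finitely many values occur. For each $\beta\in\mathcal{B}$ I introduce the associated one-parameter subgroup $\lambda_{\beta}$ and set
\[
Z_{\beta}=\{x\in V : \lambda_{\beta}\text{ fixes }x\text{ and }\mu(x)=\beta\},
\]
together with the attracting set $Y_{\beta}=\{x\in V : \lim_{t\to 0}\lambda_{\beta}(t)\cdot x\in Z_{\beta}\}$ and the parabolic subgroup $P_{\beta}=\{g\in G : \lim_{t\to 0}\lambda_{\beta}(t)\,g\,\lambda_{\beta}(t)^{-1}\text{ exists}\}$. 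The reductive part $\mathrm{Stab}(\beta)\subset P_{\beta}$ acts on $Z_{\beta}$; I define $Z_{\beta}^{ss}$ to be its semistable locus for the linearization shifted by $\beta$, and $Y_{\beta}^{ss}$ to be the points of $Y_{\beta}$ whose limit lies in $Z_{\beta}^{ss}$.

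Setting $S_{\beta}=G\cdot Y_{\beta}^{ss}$, I would then prove the three assertions. First, $S_{\beta}\cong G\times_{P_{\beta}}Y_{\beta}^{ss}$: the key input is the Bialynicki--Birula decomposition for the $\C^{*}$-action $\lambda_{\beta}$ on the smooth projective variety $V$, which exhibits $Y_{\beta}$ as a non-singular locally closed subvariety that is an affine bundle over $Z_{\beta}$; restricting to the semistable locus of the reductive $\mathrm{Stab}(\beta)$-action and inducing up along $P_{\beta}\subset G$ yields the associated-bundle description, which is in particular non-singular and locally closed. Second, $S_{0}=V^{ss}$: the stratum for $\beta=0$ is the stable manifold of the minimum $\mu^{-1}(0)$ of $f$, and by the Kempf--Ness theorem a point is semistable precisely when the closure of its $G$-orbit meets $\mu^{-1}(0)$, so this stable manifold is exactly $V^{ss}$. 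Third, the closure relation: ordering $\mathcal{B}$ by the norm $\|\beta\|$, I would show $\bar{S}_{\beta}\subset\bigcup_{\|\gamma\|\geq\|\beta\|}S_{\gamma}$, since the gradient flow of $-f$ cannot decrease the instability level in the limit.

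The main obstacle is that $f=\|\mu\|^{2}$ is not Morse--Bott, so Kirwan's notion of a \emph{minimally degenerate} function must be verified to guarantee that the stable manifolds genuinely form a stratification with the stated closure property. Equivalently, on the algebraic side the hard point is Kempf's theorem that each unstable point possesses an essentially unique optimal (most destabilizing) one-parameter subgroup, up to conjugacy by $P_{\beta}$; this is what makes the assignment $x\mapsto\beta(x)$ well defined and the strata $S_{\beta}=\{x:\beta(x)=\beta\}$ pairwise disjoint and $G$-invariant. Controlling the interplay between this optimality and the Bialynicki--Birula slices, so as to obtain simultaneously non-singularity, local closedness, and the precise parabolic induction, is the technical heart of the argument.
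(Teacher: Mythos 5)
The paper gives no proof of this theorem at all: it is quoted directly from Kirwan's book \cite[Theorem 13.5]{Kirwan}, and your outline reproduces precisely the argument of that cited source --- the Morse-theoretic study of $f=\|\mu\|^{2}$, the finite index set $\mathcal{B}$ of closest points to the origin of convex hulls of subsets of the weights, the Bia{\l}ynicki-Birula/parabolic-induction description $S_{\beta}\cong G\times_{P_{\beta}}Y_{\beta}^{ss}$, and the Kempf--Ness identification of the open stratum with $V^{ss}$. So your approach is essentially the same as the paper's (via its citation); the only imprecision worth noting is that $Z_{\beta}$ should be the union of connected components of the $\lambda_{\beta}$-fixed locus on which $\mu(x)\cdot\beta=\|\beta\|^{2}$, rather than the locus where $\mu(x)=\beta$ pointwise, but this does not affect the structure of the argument.
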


The construction of this stratification uses the diagram of weights of the Lie algebra representation of $G$ on $V\subset \P^n$, by considering a maximal torus $T$ of $G$ and the representation of $T$ on
$\C^{n+1}$. This representation splits as a sum of scalar
representations given by characters $\alpha_0,\dots,\alpha_n$ which are weights of $Lie(T)$. 

Since we can to construct a diagram of weights, if we know the weights of the Lie algebra representation we can identify the origin as the weight $0$.  The Hilbert-Mumford numerical criterion states the following:  Let $x=[x_0:\dots:x_n]\in V\subset \P^n$, then $x$ is semistable if the origin $0$ lies in the convex hull of weights $Conv\{\alpha_i\mid x_i\neq 0\}$. Moreover, $x$ is stable if $0$ lies in the interior of $Conv\{\alpha_i\mid x_i\neq 0\}$.

\subsection{Stratification of the space of degree 3 foliations}\label{Stratificacion3}

Let  $\F_{3}$ be the space of degree $3$ foliations and $PGL(3,\C)$ be the automorphism group of $\P^{2}$ acting by change of coordinates on $\F_3$. This action is given by

\[\begin{array}{rcl}
    PGL(3,\C)\times\F_{3} &\rightarrow &\F_{3} \\
    (g,X) &\mapsto & g\cdot X=DgX (g^{-1}).
  \end{array}\]

Since there is an isogeny between $\PGL_3(\C)$ and $\SL_3(\C)$, we can consider the action of $\SL_3(\C)$ instead of $\PGL_3(\C)$. The representation associated with the action $SL(3,\C)$ on $\F_{3} $ is
the kernel of the contraction map:
 \[
   \begin{array}{rccl}
     i_{1,3}: & Sym^{3}(\C^{3})^{*}\otimes\C^{3} & \longrightarrow & Sym^{2}(\C^{3})^{*}\\
              & P(x,y,z)\frac{\partial}{\partial x}+Q(x,y,z)\frac{\partial}{\partial y}+R(x,y,z)\frac{\partial}{\partial z}  &  \mapsto    & \frac{\partial P}{\partial x}+\frac{\partial Q}{\partial y}+\frac{\partial R}{\partial z}.
\end{array}
\]

This kernel is an irreducible representation denoted by  $\Gamma_{3}$, then
\[Sym^{3}(\C^{3})^{*}\otimes\C^{3}=\Gamma_{3}\oplus
  Sym^{2}(\C^{3})^{*},\] thus,
\[\small{\Gamma_{3}=\left\{X=P(x,y,z)\frac{\partial}{\partial
    x}+Q(x,y,z)\frac{\partial}{\partial y}+R(x,y,z)\frac{\partial}{\partial
    z} : \frac{\partial P}{\partial x}+\frac{\partial
    Q}{\partial y}+\frac{\partial R}{\partial z}=0\right\}}.\]

It is easy to see that $\Gamma_3$ is a complex vector space of dimension 24 generated by the following set: 
\[\left\{x^k y^i z^j \frac{\partial}{\partial x}, x^k y^i z^j\frac{\partial}{\partial y} \text{ where } k+i+j=3  \text{ and } x^ky^i\frac{\partial}{\partial z} \text{ with } k+i=3 \right\}.\]

We can apply the Hilbert-Mumford numerical criterion to obtain the explicit coordinates of the unstable foliations. In  \cite{CR16} the authors show that the space of unstable degree 3 foliations on $\P^2$ is a Zariski closed variety with 3 components given by the following vector spaces:

\scalebox{0.9}{\begin{tabular}{l}
$V_1=\langle xy^2\frac{\partial}{\partial x},\,xyz\,\frac{\partial}{\partial x},\,xz^2\frac{\partial}{\partial x},\,y^3\frac{\partial}{\partial x},\,y^2z\frac{\partial}{\partial x},\,yz^2\frac{\partial}{\partial x},\,z^3\frac{\partial}{\partial x},\, xz^2\frac{\partial}{\partial y},\,y^3\frac{\partial}{\partial y},\,y^2z\frac{\partial}{\partial y},\, yz^2\frac{\partial}{\partial y},\,z^3\frac{\partial}{\partial y},\,y^3\frac{\partial}{\partial z}\rangle_{\C},$\\[0.2cm]
$V_2 =\langle x^2z\frac{\partial}{\partial x},\,xyz\frac{\partial}{\partial x},\,xz^2\frac{\partial}{\partial x},\,y^2z\frac{\partial}{\partial x},\,yz^2\frac{\partial}{\partial x},\,z^3\frac{\partial}{\partial x},\,x^2z\frac{\partial}{\partial y},\,xz^2\frac{\partial}{\partial y},\,xyz\frac{\partial}{\partial y},\,y^2z\frac{\partial}{\partial y},\,yz^2\frac{\partial}{\partial y},\,z^3\frac{\partial}{\partial y}\rangle_{\C},$\\[0.2cm]
$V_3 =\langle x^2z\frac{\partial}{\partial x},\, xyz\frac{\partial}{\partial x},\, xz^2\frac{\partial}{\partial x},\, y^3\frac{\partial}{\partial x},\, y^2z\frac{\partial}{\partial x}, \, yz^2\frac{\partial}{\partial x},\, z^3\frac{\partial}{\partial x},\,   xz^2\frac{\partial}{\partial y},\, xyz\frac{\partial}{\partial y},\, y^2z\frac{\partial}{\partial y},\, yz^2\frac{\partial}{\partial y},\, z^3\frac{\partial}{\partial y}  \rangle_{\C}.$\\[0.4cm]
 \end{tabular}}

Thus $\F_3^{un} = \SL_3(\C)\P( V_1)\bigcup \SL_3(\C)\P (V_2)\bigcup \SL_3(\C)\P (V_3)$, where $\SL_3(\C)\P (V_i)$ denotes the $\SL_3(\C)$-orbits of points of the projectivization of the complex vector space $V_i$.
 With the diagram of weights of the Lie algebra representation of $\SL_3(\C)$ on $\F_3$ (see Figure \ref{Fig:Diagrama}) it can be constructed a stratification of $\F_3^{un}$ as in Theorem \ref{Theorem:Kirwan}. This stratification  and  the characterization of some of the strata was given in \cite{CR16}. In this work, we identify some degree 3 unstable foliations with a unique singular point of multiplicity two, and it is important to point out that such unstable foliations were not characterized in the stratification given in \cite{CR16}.

\begin{figure}[h!]
 \centering
 \includegraphics[width=0.6\textwidth]{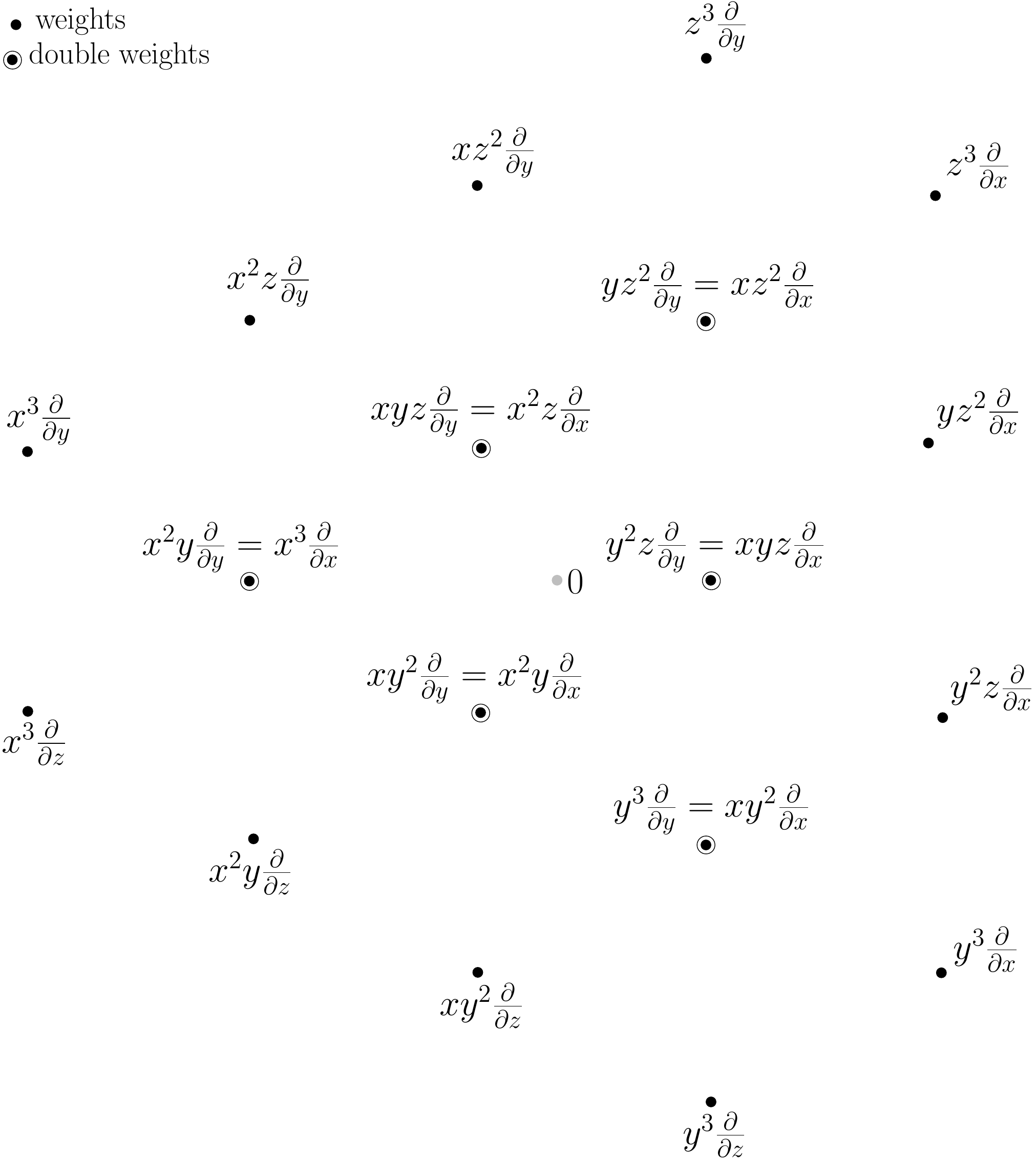}
 \caption{Diagram of weights of the representation on $\F_{3}$} \label{Fig:Diagrama}
 \end{figure}

\section{Stability of foliations with a unique singular point} \label{Estabilidad}

From now on, we will use the following notation:
\begin{align*}
    \mathcal{A}_{d}& :=\{X\in \F_{d}~:~Sing(X)=\{p\} \},\\
    \mathcal{A}_{d,m} & := \{ X\in \mathcal{A}_{d}~:~ m_{p}(X)=m\}.
\end{align*}

We will denote by $\mathcal{A}_{d,m}^{s}$ (resp. $\mathcal{A}_{d,m}^{ss}$, $\mathcal{A}_{d,m}^{un}$) to the set of stable (resp. semistable, unstable) points in  $\mathcal{A}_{d,m}$.

\subsection{Degree 1 foliations}

In \cite{AlcantaraGrado1}, the author shows that a  foliation $X\in \F_1$ is given by $X=A (x\frac{\partial}{\partial x}+y\frac{\partial}{\partial y}+z\frac{\partial}{\partial z})$ module the radial foliation $k(x\frac{\partial}{\partial x}+y\frac{\partial}{\partial y}+z\frac{\partial}{\partial z})$,  where $k\in \C$ and $A$ is a non-zero $3\times 3$ matrix over $\C$. Moreover, the foliation $X$ is unstable if and only if $A$ is nilpotent. 

It is easy to see that if $X\in \mathcal{A}_{1,1}$, then $X$ has the form $(y+az)\frac{\partial}{\partial x}+ bz\frac{\partial}{\partial y}$, thus, in this situation $X$ is unstable. 

\subsection{Degree 2 foliations}

 By \cite{CDGM}, there exist up to change of coordinates $4$ holomorphic foliations in $\mathcal{A}_2$:
\begin{enumerate}
        \item $X_{1}= y^{2}\frac{\partial}{\partial x}+z^{2}\frac{\partial}{\partial y}$,
        \item $X_{2}= -y^{2}\frac{\partial}{\partial x}+(yz-z^{2})\frac{\partial}{\partial y}+z^{2}\frac{\partial }{\partial z}$,
        \item $X_{3}=(y^{2}+z^{2})\frac{\partial}{\partial x}+yz\frac{\partial}{\partial y}$,
        \item $X_{4}= -yz\frac{\partial}{\partial x}+(xy+z^{2})\frac{\partial}{\partial y}+y^{2}\frac{\partial }{\partial z}$.
\end{enumerate}

Constructing the diagram of weights of the Lie algebra representation of $SL_{3}(\C)$ on $\F_{2}$, we can see that $X_{1},X_{2}$, and $X_{3}$ are unstable foliations with a singular point of multiplicity $2$. These foliations were characterized in stratum $3$ of the stratification of the space $\F_{2}$ given in \cite{AlcantaraGrado2}. Similarly, we can see that the foliation $X_{4}$ is an element of $\mathcal{A}_{2,1}^s$. 

\subsection{Degree 3 foliations}
A foliation $X\in\mathcal{A}_{3}$ has multiplicity $m_{p}(X)=1,2$, or $3$. In \cite{EM2011} the authors give a bound that determines when a foliation with isolated singularities is non-stable, in this case, the bound is greater than $1$, thus a foliation with a unique singular point of multiplicity $1$ is stable. We can prove that result as follows.

\begin{prop}
If $X\in \mathcal{A}_{3,1}$ then $X$ is stable.
\end{prop}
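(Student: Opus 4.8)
The plan is to verify stability through the Hilbert--Mumford numerical criterion recalled in Subsection \ref{Preliminares}: a semistable point of $\F_3$ is \emph{stable} exactly when, for every nontrivial one-parameter subgroup $\lambda$ of $\SL_3(\C)$, the origin lies in the \emph{interior} of the convex hull of the weights supporting $X$ in the diagram of Figure \ref{Fig:Diagrama}. Since stability, the multiplicity $m_p(X)$, and the cardinality of $Sing(X)$ are all invariant under the action, I would first normalize $p=[1:0:0]$ and work in the chart $U_0$, where $X$ is $f\partial_y+g\partial_z$ with $f=Q(1,y,z)-yP(1,y,z)$ and $g=R(1,y,z)-zP(1,y,z)$. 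The whole argument then consists of feeding the two hypotheses "$m_p(X)=1$" and "$Sing(X)=\{p\}$" into this interiority condition.

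I would first establish semistability by showing $X\notin\F_3^{un}$. Using the explicit spaces $V_1,V_2,V_3$ of \cite{CR16}, one checks directly that every $Y\in\P(V_i)$ is singular at $[1:0:0]$ and that this point is never an ordinary singularity: for $V_2$ one has $z\mid P,Q,R$, so the entire line $\{z=0\}$ is singular and the singularities fail to be isolated; for $V_1$ and $V_3$ the coefficients governing the linear part of $(f,g)$ at the origin, namely those of $x^3\partial_x,\ x^2y\partial_y,\ x^2z\partial_z,\ x^2z\partial_y,\ x^2y\partial_z$, all vanish, so $m_{[1:0:0]}(Y)\geq 2$. As having non-isolated singularities and having a singular point of multiplicity $\geq 2$ are both $\SL_3(\C)$-invariant, no translate of such a $Y$ can lie in $\mathcal{A}_{3,1}$; hence $\mathcal{A}_{3,1}\cap\F_3^{un}=\emptyset$ and $X$ is semistable.

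To upgrade semistability to stability I would exploit that the two hypotheses populate opposite sides of the weight diagram. Locally, $m_p(X)=1$ forces the linear part of $(f,g)$ to be nonzero, so at least one of $x^3\partial_x,\ x^2y\partial_y,\ x^2z\partial_z,\ x^2z\partial_y,\ x^2y\partial_z$ occurs in the support of $X$; these are precisely the weights on the $[1:0:0]$-side of the diagram. Globally, $Sing(X)=\{p\}$ forbids $\{x=0\}\subset Sing(X)$, so $x$ is not a common factor of $P,Q,R$, whence the support of $X$ contains at least one monomial of the form $y^iz^j\partial_\bullet$ on the opposite side of the diagram. Thus along the functional separating these two regions the center of the polytope is strictly straddled; running the same dichotomy over all maximal tori (equivalently, over all flags of $\C^3$, since $p$ sweeps out all of $\P^2$ under conjugation) would place the center in the interior of the convex hull of the support for every $\lambda$, giving stability.

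The main obstacle is exactly this last, strict, step: excluding the \emph{boundary} one-parameter subgroups, those with $\mu(X,\lambda)=0$, that is, ruling out strict semistability. The $V_i$-analysis cleanly removes the unstable stratum, and the local hypothesis $m_p(X)=1$ controls only the weights clustered near $[1:0:0]$; it is the global uniqueness of the singular point that must supply the complementary weights, and converting "$Sing(X)$ is a single point'' into explicit nonvanishing of coefficients in \emph{every} coordinate frame — so as to certify that the center sits strictly on both sides of each hyperplane through it, not merely on its boundary — is the delicate bookkeeping that carries the proof.
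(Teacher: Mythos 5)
Your semistability step is correct and is a self-contained argument the paper does not give: every point of $\P(V_1)\cup\P(V_2)\cup\P(V_3)$ either has non-isolated singularities ($V_2$, where $z$ divides $P$ and $Q$ and $R=0$, so the whole line $z=0$ is singular) or is singular at $[1:0:0]$ with multiplicity at least $2$ ($V_1$ and $V_3$), and both properties are $\SL_3(\C)$-invariant, so $\mathcal{A}_{3,1}\cap\F_3^{un}=\emptyset$. But the proposition asserts \emph{stability}, and your second step does not prove it; as you yourself concede at the end, ruling out the one-parameter subgroups with $\mu(X,\lambda)=0$ is left as ``delicate bookkeeping''. That concession marks a genuine gap, not a routine verification, and the dichotomy you set up cannot close it even for the standard torus in the normalized frame. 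From $m_p(X)=1$ you retain one weight from the cluster at the $[1:0:0]$-corner, and from $Sing(X)=\{p\}$ you retain only one weight of a monomial not divisible by $x$; but two weights ``straddling'' a single separating functional need not have the origin in, let alone in the interior of, their convex hull. Concretely, $x^2z\frac{\partial}{\partial y}$ and $y^2z\frac{\partial}{\partial y}$ satisfy your two conditions, yet in Figure \ref{Fig:Diagrama} their weights are two vectors making an angle less than $\pi$, so the segment joining them misses the origin entirely.

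The missing idea is the stronger consequence of uniqueness that the paper's proof extracts. After normalizing $p=[1:0:0]$ (so $b_{00}=c_{00}=0$), the hypothesis $Sing(X)=\{p\}$ implies in particular that $[0:1:0]$ and $[0:0:1]$ are \emph{not} singular, which forces $(a_{30},c_{30})\neq 0$ and $(a_{03},b_{03})\neq 0$: a nonzero coefficient on a $y^3$-monomial and on a $z^3$-monomial. These weights sit at the two far corners of the diagram, and combined with the multiplicity-one condition $(a_{01},a_{00},b_{10},c_{10})\neq 0$ near the $[1:0:0]$-corner, any choice of one weight from each of the three groups places $0$ in the interior of its convex hull; it is this three-corner configuration, not a two-sided straddle, that yields stability. (For a fully watertight argument one must still treat frames in which the unique singular point of the transformed foliation is not $[1:0:0]$: if it is at no coordinate point, all three corner conditions hold simultaneously, and if it is at another coordinate point, one reduces to the computed case by permuting coordinates. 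The paper itself is terse on this, but the three-corner computation is the heart of the proof, and your proposal never reaches it.)
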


\begin{proof}
Consider the general foliation given by \\

\scalebox{.93}{\begin{tabular}{l}
$X=\left(\sum_{i,j}a_{ij}x^{3-i-j}y^iz^j \right)\frac{\partial }{\partial x}+\left(\sum_{i,j}b_{ij}x^{3-i-j}y^iz^j\right)\frac{\partial}{\partial y}+\left(c_{00}x^3+c_{10}x^{2}y+c_{20}xy^{2}+c_{30}y^{3}\right)\frac{\partial}{\partial z}.$ \\[0.4cm]
\end{tabular} }

Without loss of generality, we can choose the singular point $p$ to be  $p=[1:0:0]$, which means that $b_{00}=c_{00}=0$. Moreover,  the points  $[0:1:0]$ and $[0:0:1]$ are not singular points, this implies that $(a_{30},c_{30})\neq 0$ and $(a_{03},b_{03})\neq 0$.  The condition $m_p(X)=1$ implies that $(a_{01},a_{00},b_{10},c_{10})\neq 0$. By considering the diagram of weights (Figure \ref{Fig:Diagrama}) and the Hilbert-Mumford stability criterion we conclude that $X$ is a stable foliation. In particular, $X\in\mathcal{A}_{3,1}$ is stable. 
\end{proof}

\begin{ex}
Consider the foliation \[X=(5xyz-y^{3}+2z^{3})\frac{\partial}{\partial x}+(-\frac{3}{2}x^{2}y-\frac{3}{2}xz^{2}+\frac{9}{2}y^{2}z)\frac{\partial}{\partial y}-3xy^{2}\frac{\partial}{\partial z}.\]
Note that the point $[1:0:0]$ is a singular point of $X$. The local representation around this point is given by $f=-\frac{3}{2}y-\frac{3}{2}z^{2}-\frac{1}{2}y^{2}z+y^{4}-2yz^{3}$ and $g=-3y^{2}-5yz^{2}+y^{3}z-2z^{4}$. Using {\it{Macaulay2}} we can see that the reduced Gr\"obner Basis with respect to the  lexicographic order is $\{z^{13}, y+z^{2}-z^{5}+3z^{11}\}$, this shows that $I_0(f,g)=13$. Then, $[1:0:0]$ is the unique singular point of $X$ with multiplicity one. In fact, this is a particular example of a family of foliations of multiplicity one parameterized by a curve (see \cite{PA}).
\end{ex}

There are a few known examples of foliations with a unique singular point of multiplicity $1$, some of these examples can be found in \cite{AP19, L22, A18, PA}.

For the case of multiplicity $3$, a well known result state that such foliations are unstable. Proof of this fact is given in the following proposition.

\begin{prop} Let
  $X=P(x,y,z)\frac{\partial}{\partial x}+Q(x,y,z)\frac{\partial}{\partial y}+R(x,y,z)\frac{\partial}{\partial z}$ be a foliation in $\F_3$. If $X\in\mathcal{A}_{3,3}$, then $X$ is unstable.
\end{prop}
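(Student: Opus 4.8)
The plan is to combine the $\SL_3(\C)$-invariance of stability with the Hilbert-Mumford criterion and the description of $\F_3^{un}$ recalled in Subsection \ref{Stratificacion3}. Since stability is an $\SL_3(\C)$-invariant notion and $\SL_3(\C)$ acts transitively on $\P^2$, I would first move the singular point to $p=[1:0:0]$, and use the radial equivalence to normalize $X$ so that its $\partial/\partial z$-component involves only $x^3,x^2y,xy^2,y^3$, i.e. so that $X$ has exactly the shape of the general foliation written in the proof of the multiplicity-one case, with coefficients $a_{ij},b_{ij},c_{ij}$. The requirement that $p$ be singular already forces $b_{00}=c_{00}=0$.

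Next I would translate the hypothesis $m_p(X)=3$ into the vanishing of coefficients. Working with the local field $f=Q(1,y,z)-yP(1,y,z)$ and $g=R(1,y,z)-zP(1,y,z)$ — which one checks are unchanged by the radial equivalence, so the normalization above is harmless — the condition $m_p(X)=3$ means that both $f$ and $g$ vanish to order at least $3$ at the origin. Expanding into homogeneous forms and killing the degree-$1$ and degree-$2$ parts of $g$ gives $c_{10}=c_{20}=0$ and $a_{00}=a_{10}=a_{01}=0$; feeding these into $f$ and killing its degree-$1$ and degree-$2$ parts gives $b_{10}=b_{01}=b_{20}=b_{11}=b_{02}=0$. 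The upshot is that the only monomials surviving in $X$ are
\[ \{xy^2, xyz, xz^2, y^3, y^2z, yz^2, z^3\},\quad \{y^3, y^2z, yz^2, z^3\},\quad \{y^3\} \]
in the $\partial/\partial x$-, $\partial/\partial y$- and $\partial/\partial z$-components respectively. This bookkeeping is the main technical step, but it is entirely elementary.

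Finally I would conclude the instability in two equivalent ways. On one hand, each of the surviving monomial vector fields is among the generators of $V_1$, so $X\in\P(V_1)\subset\F_3^{un}$ and $X$ is unstable. On the other hand, to keep the argument self-contained I would verify this directly through the one-parameter subgroup $\lambda(t)=\operatorname{diag}(t^2,t^{-1},t^{-1})$: a monomial $x^ky^iz^j\,\partial/\partial x$ (resp. $\partial/\partial y$, $\partial/\partial z$) has $T$-weight $(1-k,-i,-j)$ (resp. $(-k,1-i,-j)$, $(-k,-i,1-j)$), so its weight against $\lambda$ equals $5-3k$ (resp. $2-3k$, $2-3k$); on the surviving terms above one has $k\le 1$ in the $\partial/\partial x$-component and $k=0$ in the other two, so every such weight is strictly positive. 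Hence $\lim_{t\to 0}\lambda(t)\cdot X=0$ in the affine cone, i.e. $0\in\overline{\O(\bar X)}$, which is exactly the definition of unstable. The only points requiring genuine care will be keeping the sign conventions in the weight diagram of Figure \ref{Fig:Diagrama} consistent with the $\SL_3(\C)$-action used here, and the one-line check that the radial normalization leaves $f$ and $g$ unchanged; neither is a real obstacle. Note also that only the single multiplicity-$3$ point is used, so the hypothesis that $p$ be the unique singularity is not needed for this direction.
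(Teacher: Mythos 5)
Your proposal is correct and follows essentially the same route as the paper: normalize the singular point to $[1:0:0]$, translate $m_p(X)=3$ into the vanishing of the low-degree coefficients via the local fields $f,g$, observe that the surviving monomials place $X$ in $V_1$, and conclude instability from the weight diagram. Your explicit destabilizing one-parameter subgroup $\lambda(t)=\operatorname{diag}(t^2,t^{-1},t^{-1})$ merely makes concrete the paper's appeal to ``computing the weights of $X$'' in Figure~\ref{Fig:Diagrama}, and your coefficient bookkeeping and weight computations check out.
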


\begin{proof} Suppose that $p=[1:0:0]$ is the unique singular point of $X$. Since the multiplicity in $p$ is $3$, we have that $X$ should be given by  
\begin{align*}
P(x,y,z)&=a_{20}xy^{2}+a_{11}xyz+a_{02}xz^{2}+a_{30}y^{3}+a_{21}y^{2}z+a_{12}yz^{2}+a_{03}z^{3},\\
Q(x,y,z)&=b_{30}y^{3}+b_{21}y^{2}z+b_{12}yz^{2}+b_{03}z^{3},\\
R(x,y,z)&=c_{30}y^{3}, 
\end{align*}
 with $a_{ij},b_{ij},c_{30}\in\C$. This implies that  $X$ lies in $V_{1}$ (see subsection (\ref{Stratificacion3}) for the definition of $V_1$). Computing the weights of $X$, we see that $X$ is unstable (use Figure \ref{Fig:Diagrama}). Moreover, if $X\in\mathcal{A}_{3,3}^{un}$ then $X$ lies in the stratum $S_{6}$ given in \cite{CR16}.
\end{proof}

\begin{ex}
The foliation $X=(y^{3}+y^{2}z-yz^{2}+z^{3})\frac{\partial}{\partial x}+z^{3}\frac{\partial}{\partial y}$ has a local representation given by  $f=-y^{4}-y^{3}z+y^{2}z^{2}-y\,z^{3}+z^{3}$ and $g= -y^{3}z-y^{2}z^{2}+y\,z^{3}-z^{4}$.

We observe that $p=[1:0:0]$ is the unique singular point of $X$ and the Milnor number $\mu_p(X)= 13$.  This computation can be done in {\it{Macaulay2}} with the following instruction: {\tt degree(ideal(f,g):saturate(ideal(f,g))). }

\end{ex}

\section{Results}\label{Results}

\subsection{Multiplicity $2$}

We want to highlight that in this work we describe some degree 3 unstable foliations with a unique singular point of multiplicity two, which were not characterized in the stratification given in \cite{CR16}, and we  exhibit explicit foliations in $\mathcal{A}_{3,2}^{ss}$ and $\mathcal{A}_{3,2}^s$. 

In all most computations, we will use the polynomials $f(y,z)$ and $g(y,z)$ of  the local representation of a foliation $X$, and we denote by $P=P(1,y,z),Q=Q(1,y,z)$ and $R=R(1,y,z)$ the deshomogenization of $P(x,y,z)$, $Q(x,y,z)$ and $R(x,y,z)$ respectively. 

\subsubsection{Case of Multiplicity $2$; semistable non-stable}

To obtain strictly semistable foliations of degree $3$ we will apply the diagram of weights of the representation in the following way: in the diagram of Figure (\ref{Fig:Diagrama}), consider the line that passes through the origin and at least two weights of the representation. Up to the change of coordinates, we can consider the line that passes through the weights defined by at least one element of the set $\{ x^2z\frac{\partial}{\partial x}, xyz\frac{\partial}{\partial y} \} $, and at least one weight  defined by the set $\{xy^2\frac{\partial}{\partial x}, y^3\frac{\partial}{\partial y}\}$. So, a strictly semistable degree 3 foliations can be written as

\begin{equation}\label{equation:StrictlySemistable}
X=P(x,y,z)\frac{\partial }{\partial x}+Q(x,y,z)\frac{\partial}{\partial y},
\end{equation}
 where  
\begin{equation*}
{\small
\begin{split}
P(x,y,z)&=-c_{02}x^{2}z-c_{21}xy^{2}-c_{12}xyz-b_{03}xz^{2}+a_{30}y^{3}+a_{21}y^{2}z+a_{12}yz^{2}+a_{03}z^{3},\\
Q(x,y,z)&=\left(b_{11}-c_{02}\right)xyz+b_{02}xz^{2}+\left(b_{30}-c_{21}\right)y^{3}+\left(b_{21}-c_{12}\right)y^{2}z+\left(-b_{03}+b_{12}\right)yz^{2}+b_{03}z^{3},\\
\end{split}
}
\end{equation*}
with $a_{ij},b_{ij},c_{ij}\in \C$. These last equations up to foliations equivalence can be also found in \cite[Theorem 9]{CR16}.

With the representation (\ref{equation:StrictlySemistable}), the point $[1:0:0]$ is a singular point of $X$. Moreover, the following conditions hold:

\begin{enumerate}
    \item The point $[0:1:0]$ is a singular point of $X$ if and only if $a_{30}=0$,
    \item The point $[0:0:1]$ is a singular point of $X$ if and only if $a_{03}=b_{03}=0$,
    \item If $a_{30}$ and $b_{30}$ are not zero, then the point $[1:(b_{30}/a_{30}):0]$ is a singular point of $X$.
\end{enumerate}\label{puntosSingulares}

If we consider the point $[1:0:0]$ as the unique singular point of $X$, we can assume that $a_{30}\neq 0$, $(a_{03},b_{03})\neq 0$, and $b_{30}=0$. Thus, the intersection index of the curves defined by the local representation  $f$ and $g$ at the origin $(0,0)$ is 
 \begin{equation}\label{DesFG1}
 I_{0}(f,g)=I_{0}(-a_{30}y^{4},z)+I_{0}(Q,P) = 4+I_0(Q,P).\end{equation}
To get $I_0(f,g)=13$, we need to find the conditions on the coefficients of $P$ and $Q$  such that $I_{0}(Q,P)=9$.

\begin{thm} 
The foliation $X\in \mathcal{A}_{3,2}^{ss}$ if and only if $X=P(x,y,z)\frac{\partial}{\partial x}+Q(x,y,z)\frac{\partial}{\partial y}$, where 
\begin{align*}
P(x,y,z) &=xy^{2}-c_{12}xyz-b_{02}(b_{02}+c_{12})xz^{2}+a_{30}y^{3}+a_{21}y^{2}z+a_{12}yz^{2}+a_{03}z^{3},\\
Q(x,y,z)&=xyz+b_{02}xz^{2}+y^{3}+\left(b_{21}-c_{12}\right)y^{2}z+\left(-b_{02}^{2}-b_{02}c_{12}+b_{12}\right)yz^{2}+b_{02}(b_{02}+c_{12})z^{3},
\end{align*}
where the following relations on the coefficients hold:\\
$a_{30}^{-1}(b_{21}-2\,c_{12}-5\,b_{02})=1$,\\
$a_{21}=-b_{21}c_{12}+c_{12}^{2}-b_{21}b_{02}-7\,b_{02}^{2}+b_{12}$,\\
 $a_{12}=c_{12}^{2}b_{02}+2\,c_{12}b_{02}^{2}-3\,b_{02}^{3}-b_{12}c_{12}-b_{12}b_{02}+c_{12}b_{02}+b_{02}^{2}$,\\
$a_{03}=-b_{02}^{4}-c_{12}^{2}b_{02}-2\,c_{12}b_{02}^{2}-b_{02}^{3}$.
\end{thm}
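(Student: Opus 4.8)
The plan is to prove both implications at once by reducing the statement to a single local intersection computation. I may assume from the start that $X$ is written in the strictly semistable normal form (\ref{equation:StrictlySemistable}): by the weight‑diagram construction preceding the statement (equivalently \cite[Theorem 9]{CR16}) every semistable non‑stable foliation is $\SL_3(\C)$‑equivalent to one of this shape, and any foliation of this shape is semistable but not stable because its weights all lie on a supporting line through the origin. Thus it suffices to decide when such an $X$ has $[1:0:0]$ as its \emph{unique} singular point of multiplicity $2$; by the Milnor identity $d^2+d+1=\sum_p\mu_p$ of \cite{J06} this is equivalent to $I_{0}(f,g)=13$. First I would impose uniqueness: the three singular‑point conditions listed after (\ref{equation:StrictlySemistable}) force $b_{30}=0$, $a_{30}\neq 0$ and $(a_{03},b_{03})\neq 0$, and the residual automorphisms preserving the normal form (the torus and unipotent part of the associated parabolic, together with scalars) normalize the leading coefficients, giving $c_{21}=-1$, $c_{02}=0$ and $b_{11}=1$. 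With $b_{30}=0$, formula (\ref{DesFG1}) reduces everything to the purely local condition $I_{0}(Q,P)=9$ for $P=P(1,y,z)$, $Q=Q(1,y,z)$.

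The first step of the local analysis is a common‑tangent reduction. After normalization the quadratic leading form of $Q$ is $Q_2=z(y+b_{02}z)$, and since $P_2|_{z=0}=y^{2}\neq 0$ the line $z=0$ is not a tangent of $P$; hence to exceed the generic value $\mathrm{mult}_0(P)\cdot\mathrm{mult}_0(Q)=4$ the two tangent cones must share the line $y+b_{02}z=0$. Factoring $P_2=y^{2}-c_{12}yz-b_{03}z^{2}$ through $(y+b_{02}z)$ forces $b_{03}=b_{02}(b_{02}+c_{12})$, which is exactly the form displayed for the $xz^{2}$‑ and $z^{3}$‑coefficients. I would then change coordinates by $u=y+b_{02}z$, after which $P_2=u\bigl(u-(2b_{02}+c_{12})z\bigr)$ and $Q_2=zu$, so both curves are tangent to $u=0$.

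The heart of the proof is a branch‑splitting computation. Because $Q_2=zu$ has distinct factors, $Q=0$ has two smooth analytic branches at the origin, one tangent to $z=0$ and one tangent to $u=0$. The branch tangent to $z=0$ is transverse to the tangent cone of $P$, so it contributes $I_0(P,\cdot)=\mathrm{mult}_0(P)=2$; consequently the branch $B$ tangent to $u=0$ must contribute $7$. I would solve $Q=0$ for this branch as a power series $u=\psi(z)=O(z^{2})$, substitute into $P$, and read off $\mathrm{ord}_z\,P(\psi(z),z)$, whose lowest possible value is $3$. Requiring this order to be exactly $7$ means the coefficients of $z^{3},z^{4},z^{5},z^{6}$ must vanish while that of $z^{7}$ does not: these four vanishing conditions are four equations for the four unknowns $a_{30},a_{21},a_{12},a_{03}$ in terms of $b_{02},c_{12},b_{21},b_{12}$, and solving them produces precisely the displayed relations $a_{30}=b_{21}-2c_{12}-5b_{02}$, $a_{21}=\cdots$, $a_{12}=\cdots$, $a_{03}=\cdots$.

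The main obstacle I anticipate is this last step: since $P$ and $Q$ are tangent, $\psi$ must be expanded several terms deep and the relevant coefficients of $P(\psi(z),z)$ depend intricately on all of $a_{21},a_{12},a_{03},b_{21},b_{12}$, so the bookkeeping is delicate; moreover one must certify that the order is \emph{exactly} $9$ (verifying the $z^{7}$‑coefficient is nonzero under the relations), not merely $\geq 9$. I would organize this via the Newton polygon of $P$ in the $(u,z)$ coordinates and confirm the final system with a Gröbner‑basis computation in \emph{Macaulay2}, as is done for the examples elsewhere in the paper. The converse implication is then a verification that reverses these steps: any foliation of the displayed form satisfies $b_{30}=0$, $a_{30}\neq 0$, $(a_{03},b_{03})\neq 0$, has multiplicity $2$ by inspection of the leading forms, and satisfies $I_{0}(f,g)=4+9=13=d^{2}+d+1$, so by \cite{J06} $[1:0:0]$ is its unique singular point; together with semistability non‑stability from the weights this places $X$ in $\mathcal{A}_{3,2}^{ss}$.
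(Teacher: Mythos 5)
Your local computation is a genuinely different route from the paper's: you split $Q$ into its two smooth branches and require the branch tangent to $u=y+b_{02}z$ to meet $P$ with order exactly $7$, whereas the paper factors the auxiliary quartic $H=-zP+mQ$ into lines and does the bookkeeping through $I_{0}(P,Q)=\sum_i I_{0}(l_i,Q)-I_{0}(z,Q)$, forcing $H=q^{4}$. Both methods, correctly executed, characterize $I_{0}(Q,P)=9$ and would yield the same four relations. However, there is a genuine gap at the first step of your argument: you dispose of $c_{02}$ by asserting that residual automorphisms preserving the normal form allow you to set $c_{02}=0$, $b_{11}=1$, $c_{21}=-1$. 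For $b_{11}$ and $c_{21}$ this is legitimate (the maximal torus together with projective rescaling acts on these two coefficients through independent nonzero characters, so once they are known to be nonzero they can be scaled to $1$ and $-1$). But $c_{02}$ can \emph{never} be normalized to zero: the monomials $x^{2}z\frac{\partial}{\partial x}$ and $xyz\frac{\partial}{\partial y}$ carrying $c_{02}$ have weights lying on the boundary line of the half-plane that defines the normal form (\ref{equation:StrictlySemistable}). The unipotent radical of the associated parabolic (the upper-triangular unipotents) leaves these boundary coefficients untouched, since it can only feed into them coefficients from strictly lower weight levels, which vanish identically on the normal form; and the torus and scalars merely multiply $c_{02}$ by nonzero constants. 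So no residual automorphism kills a nonzero $c_{02}$, and your proof silently discards the case $c_{02}\neq 0$.

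This makes the ``only if'' direction incomplete: a strictly semistable foliation with a unique singular point of multiplicity $2$ could a priori have $c_{02}\neq 0$, and nothing in your argument excludes it. The case must be eliminated by computation, which is exactly what the paper does: if $P_{1}=-c_{02}z\neq 0$ then $I_{0}(f,g)=7\neq 13$, so $[1:0:0]$ cannot be the unique singular point, and only after this may one set $c_{02}=0$ and proceed. Two smaller points. First, your claim that ``any foliation of this shape is semistable but not stable'' is false without nonvanishing conditions (the paper requires $c_{21}\neq 0$ together with $c_{02}\neq 0$ or $b_{11}-c_{02}\neq 0$); the normal form also contains unstable foliations, and your own tangent-cone argument tacitly uses $c_{21}\neq 0$ (so that $z\nmid P_{2}$) and $b_{11}\neq 0$. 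Second, the contact order along the branch tangent to $u=0$ should be certified to be exactly $7$ (your later reference to ``exactly $9$'' conflates it with $I_{0}(P,Q)$), the nonvanishing of the $z^{7}$-coefficient playing the role of the paper's condition $q\nmid P_{3}$; deferring this and the four vanishing conditions entirely to a Gr\"obner-basis computation leaves the heart of the equivalence unverified.
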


\begin{proof}

Let $X$ be a foliation in $\mathcal{A}_{3,2}^{ss}$, then $X$ is defined as (\ref{equation:StrictlySemistable}). We can assume that $p=[1:0:0]$ is the unique singular point of $X$. 

Consider the decomposition of  $P$ and $Q$ into  their homogeneous components $P=P_{3}+P_{2}+P_{1}$ and $Q=Q_{3}+Q_{2}$ where
\begin{equation}
\begin{split}
P_{3}&= a_{30}y^{3}+a_{21}y^{2}z+a_{12}yz^{2}+a_{03}z^{3},\\
P_{2}&= -c_{21}y^{2}-c_{12}yz-b_{03}z^{2},\\
P_{1}&= -c_{02}z,\\
Q_{3}&=-c_{21}y^{3}+(b_{21}-c_{12})y^{2}z+(b_{12}-b_{03})yz^{2}+b_{03}z^{3},\\
Q_{2}&=(b_{11}-c_{02})yz+b_{02}z^{2},\\
\end{split}\label{PQ}
\end{equation}

Since $X$ is semistable non-stable, one of the following conditions should be satisfied:

    \[(c_{02}\neq 0 \mbox{ and } c_{21}\neq 0) \mbox{ or } (b_{11}-c_{02}\neq 0 \mbox{ and } c_{21}\neq 0).\]

By these semistability conditions and the uniqueness of the singular point we have $P_{3}\neq 0, Q_{3}\neq 0$ and $P_{2}\neq 0$. We can see that if $Q_{2}=0$, then $I_{0}(P,Q)=3$ and $I_{0}(f,g)=7$. Thus, we assume that $Q_{2}\neq 0$ and we analize two cases: $P_{1}\neq 0$ and $P_{1}=0$. In the first case, $P_{1}\neq 0$ implies $I_{0}(f,g)=7$, then the interesting case to analyze is when $P_1=0$, ($c_{02}=0$). 

Since $Q_{2}\neq 0$, it can be factored by $Q_{2}=zq$, where $q=b_{11}y+b_{02}z$, and note that $z\nmid P_{2}$ because $c_{21}\neq 0$. Also note that if $q\nmid P_{2}$ then $I_{0}(P,Q)=4$. On the other hand, if $q|P_{2}$ this can be factored by $P_{2}=qm$, where $m=y+bz$ for some $b\in\C$ because of $c_{21}\neq 0$.

Define the following polynomial $H:=-zP+mQ=-zP_{3}+mQ_{3}$. This is a degree $4$ homogenous polynomial in two variables, thus $H$ can be factored into four lines as $H=\prod_{i=1}^{4}l_{i}$. Using properties of the intersection index we can deduce that, if $l_{i}\in\{z,q\}$ and $l_{i}\nmid P_{3}$ for all $i$, then 
\[5\leq I_{0}(P,Q)=\sum_{i=1}^{4}I_{0}(l_{i},Q)-I_{0}(z,Q)\leq 9.\]

\begin{claim} The equality $I_0(P,Q)=9$ is obtained if $a_{30}b_{11}c_{21}\neq 0$, $b_{30}=c_{02}=0$, $(a_{03},b_{03})\neq 0$, $H=q^4$, $q|P_{2}$ and $q\nmid P_{3}$.
\end{claim}
To prove this claim note that if $l_i=z$ for some $i$, then $z\mid mQ_3$ which is not possible because $c_{21}\neq 0$. Thus $H=q^4$ and in this case, we will have many conditions on the coefficients.  First note that from $P_2=mq$ we have $b_{11}+c_{21}=0$ and 
$b_{11}b_{03}=b_{02}(b_{02}+c_{12})$. From $H=q^4$,
we have that $b_{11}^4+c_{21}=0$, if we assume  without loss of
generality that $b_{11}=1$, so $c_{21}=-1$  and we also get the following
equations:\\
$a_{30}=b_{21}-2\,c_{12}-5\,b_{02}$,\\
$a_{21}=-b_{21}c_{12}+c_{12}^{2}-b_{21}b_{02}-7\,b_{02}^{2}+b_{12}$,\\
 $a_{12}=c_{12}^{2}b_{02}+2\,c_{12}b_{02}^{2}-3\,b_{02}^{3}-b_{12}c_{12}-b_{12}b_{02}+c_{12}b_{02}+b_{02}^{2}$,\\
$a_{03}=-b_{02}^{4}-c_{12}^{2}b_{02}-2\,c_{12}b_{02}^{2}-b_{02}^{3}$.
\vskip1mm

Now, if $X$ is a foliation satisfying all the above conditions, we can see that the resultant $Res(f,g,z)$ of $f=Q-yP$ and $g=-zP$ is a polynomial in $\C[b_{02},b_{21},b_{12},c_{12}][y]$
 of degree $13$.  Since $f(0,z)=z(b_{03}z+b_{02})$ and  $g(0,z)=0$ we can see that  $f(0,z)=g(0,z)=0$ if and only if $z=0$. Then we have that $(0,0)$ is the unique solution of $f=g=0$ and $I_0(f,g)=13$. \\
It is easy to see by using the diagram of weights (Figure
 \ref{Fig:Diagrama}) that $X$ is a semistable non stable foliation, thus $X\in \mathcal{A}_{3,2}^{ss}$.
\end{proof}

\begin{ex}
Consider the foliation given by
\[X=(x\,y^{2}-5\,y^{3}-2\,x\,y\,z-12\,y^{2}z-3\,x\,z^{2}-y\,z^{2}-10\,z^{3})\frac{\partial}{\partial x}+ (y^{3}+x\,y\,z+2\,y^{2}z+x\,z^{2}+3\,z^{3})\frac{\partial}{\partial y}. \]

A local representation for $X$ is given by 

$f(y,z)=5\,y^{4}+12\,y^{3}z+y^{2}z^{2}+10\,y\,z^{3}+4\,y^{2}z+3\,y\,z^{2}+3\,z^{3}+y\,z+z^{2}$
and 

$g(y,z)= 5\,y^{3}z+12\,y^{2}z^{2}+y\,z^{3}+10\,z^{4}-y^{2}z+2\,y\,z^{2}+3\,z^{3}.$

We can see that the coefficients satisfy the conditions of the above theorem, and with a direct computation in Macaulay2 we can see that $I_0(f,g)=13$. Thus $X\in \mathcal{A}_{3,2}^{ss}$.
\end{ex}

\subsubsection{Case of Multiplicity $2$; stable}
Now we describe a degree 3 stable foliation with a unique singular point of multiplicity 2. 
Let $X=P(x,y,z)\frac{\partial}{\partial x}+Q(x,y,z)\frac{\partial}{\partial y}+
 R(x,y,z)\frac{\partial}{\partial z}$, where
\begin{align*}
P(x,y,z)=& a_{10}x^{2}y+a_{01}x^{2}z+a_{20}xy^{2}+a_{11}xyz+a_{02}xz^{2}+a_{30}y^{3}+a_{21}y^{2}z+a_{12}yz^{2}+a_{03}z^{3},\\
Q(x,y,z)=& b_{30}y^{3}+b_{21}y^{2}z+b_{12}yz^{2}+b_{03}z^{3},\\
R(x,y,z)=& c_{30}y^{3},
\end{align*}
where $a_{10}a_{01}\neq 0$.

\begin{prop}
Let $X$ be the foliation as above. Then $X\in \mathcal{A}_{3,2}^s$ if the coefficients of $P$ and $Q$ satisfy the following conditions:
\begin{enumerate}
\item $b_{30}=-4a_{10}^{3}a_{01}$,
\item $b_{21}=-6a_{10}^{2}a_{01}^{2}$,
\item $b_{12}=-4a_{10}a_{01}^{3}$,
\item $b_{03}=-a_{01}^{4}$,
\item $c_{30}=a_{10}^{4}$,
\item $Res(a_{10}y+a_{01}z, Q-y(a_{20}y^{2}+a_{11}yz+a_{02}z^{2}))=0$,
\item $Res(a_{10}y+a_{01}z,a_{30}y^{3}+a_{21}y^{2}z+a_{12}yz^{2}+a_{03}z^{3})\neq 0$.
\end{enumerate}
\end{prop}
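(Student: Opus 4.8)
The plan is to verify the three conditions implicit in the claim $X\in\mathcal{A}_{3,2}^s$ one at a time: that $p=[1:0:0]$ is a singular point of multiplicity $2$, that it is the \emph{only} singular point, and that $X$ is stable. First I would observe that $P(1,0,0)=Q(1,0,0)=R(1,0,0)=0$, so $p$ is always singular. Writing the local representation $f=Q(1,y,z)-yP(1,y,z)$ and $g=R(1,y,z)-zP(1,y,z)$ and abbreviating $\ell:=a_{10}y+a_{01}z$, the lowest-order forms are $f_{2}=-y\ell$ and $g_{2}=-z\ell$; since $a_{10}a_{01}\neq 0$ these are nonzero of degree $2$, and neither $f$ nor $g$ has a linear term, so $m_{p}(X)=2$. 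Once uniqueness is established, the proposition of Jouanolou forces $\mu_{p}(X)=13$, so the real content is uniqueness and stability.

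For uniqueness the key identity I would establish is
\[
zf-yg=z\,Q(1,y,z)-y\,R(1,y,z)=b_{30}y^{3}z+b_{21}y^{2}z^{2}+b_{12}yz^{3}+b_{03}z^{4}-c_{30}y^{4},
\]
where the contributions of $P$ cancel. Conditions (1)--(5) are precisely the five coefficient comparisons that collapse the right-hand side into $-(a_{10}y+a_{01}z)^{4}=-\ell^{4}$. Hence any affine singular point lies on the line $\ell=0$. Parametrizing that line by $(y,z)=(a_{01}t,-a_{10}t)$, I would restrict $f$ to it: the part $f_{2}=-y\ell$ vanishes identically on $\ell=0$; the cubic part $f_{3}=Q(1,y,z)-y(a_{20}y^{2}+a_{11}yz+a_{02}z^{2})$ is exactly the polynomial appearing in (6), so $Res(\ell,f_{3})=0$ says $f_{3}$ vanishes on the line; and the quartic part $f_{4}=-y(a_{30}y^{3}+a_{21}y^{2}z+a_{12}yz^{2}+a_{03}z^{3})$ has resultant with $\ell$ nonzero by (7), so it restricts to $c\,t^{4}$ with $c\neq 0$. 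Therefore $f|_{\ell=0}=c\,t^{4}$ vanishes only at the origin, giving $V(\langle f,g\rangle)=\{(0,0)\}$.

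It then remains to rule out singular points on $\{x=0\}$. There a singular point requires $P(0,y,z)=0$ together with $zQ(0,y,z)-yR(0,y,z)=0$; the same computation as above yields $zQ(0,y,z)-yR(0,y,z)=-\ell^{4}$, whose only zero is $[0:-a_{01}:a_{10}]$, and at that point $P(0,y,z)=a_{30}y^{3}+a_{21}y^{2}z+a_{12}yz^{2}+a_{03}z^{3}$ is nonzero by (7). So $p$ is the unique singularity and $X\in\mathcal{A}_{3,2}$. For stability I would apply the Hilbert--Mumford criterion via the weight diagram of Figure \ref{Fig:Diagrama}: conditions (1)--(5) force $b_{30},b_{21},b_{12},b_{03},c_{30}$ to be nonzero because $a_{10}a_{01}\neq 0$, so the monomials $x^{2}y\frac{\partial}{\partial x}$, $x^{2}z\frac{\partial}{\partial x}$, $y^{3}\frac{\partial}{\partial y}$, $y^{2}z\frac{\partial}{\partial y}$, $yz^{2}\frac{\partial}{\partial y}$, $z^{3}\frac{\partial}{\partial y}$ and $y^{3}\frac{\partial}{\partial z}$ all occur in $X$. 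Computing their weights and plotting them, one checks that $0$ lies in the interior of $Conv\{\alpha_i\mid x_i\neq 0\}$, whence $X$ is stable and $X\in\mathcal{A}_{3,2}^{s}$.

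I expect the main obstacle to be the uniqueness step, and specifically the bookkeeping that shows the five numerical relations (1)--(5) are engineered exactly so that $zf-yg$ becomes the perfect fourth power $-\ell^{4}$, together with the interpretation of (6) and (7) as the vanishing of the cubic part and the non-vanishing of the quartic part of $f$ along $\ell=0$. The multiplicity computation and the stability check are routine once the nonzero monomials have been identified.
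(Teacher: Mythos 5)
Your proof is correct, and it rests on the same algebraic pivot as the paper's: the identity $zf-yg=zQ-yR$, which conditions (1)--(5) collapse into $-(a_{10}y+a_{01}z)^{4}=-\ell^{4}$, together with the reading of (6) and (7) as ``$\ell$ divides the cubic part of $f$ but not the quartic part.'' Where you diverge is in what you do with that identity. The paper computes the local intersection number at the origin: setting $A=-zQ+yR$, it uses $I_{0}(f,g)=I_{0}(A,f)-I_{0}(y,f)$, factors $A$ into four linear forms, runs a case analysis showing that any factor other than $\ell$ caps the index strictly below $13$, and concludes $I_{0}(A,f)=16$, $I_{0}(y,f)=3$, hence $\mu_{p}(X)=13$; uniqueness of the singular point is then only implicit, via Jouanolou's formula $\sum_{p}\mu_{p}(X)=13$. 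You instead prove uniqueness set-theoretically --- common zeros of $f,g$ must lie on $\ell=0$ because $zf-yg=-\ell^{4}$, and $f$ restricts to $ct^{4}$ with $c\neq 0$ on that line --- and you also check the line at infinity explicitly, which the paper never does; the Milnor number $13$ then comes for free from Jouanolou. Your route buys a cleaner logical structure: no intersection-index bookkeeping, an explicit treatment of singular points at infinity, and automatic exclusion of non-isolated singularities (your zero-locus argument shows $Sing(X)$ is literally one point, whereas the paper's appeal to Jouanolou tacitly presupposes isolated singularities). What the paper's route buys is an explanation of where conditions (1)--(7) come from: its case analysis shows that any other factorization of $A$, or failure of (6) or (7), forces $I_{0}(f,g)<13$, so within this family the conditions are essentially forced rather than merely sufficient. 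The multiplicity computation and the final check that $0$ lies in the interior of the convex hull of weights in Figure \ref{Fig:Diagrama} are the same in both arguments, and at the same level of detail.
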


\begin{proof}
Suposse that $[1:0:0]\in Sing(X)$ is the unique singular point, we consider $A=-zQ+yR$. Then $I_{0}(f,g)=I_{0}(A,f)-I_{0}(y,f)$. We can see that $I_{0}(y,f)=I_{0}(y,b_{03}z^{3})=3$ if $b_{03}\neq 0$.

The condition $a_{10}a_{01}\neq 0$ implies that $[1:0:0]$ is of multiplicity $2$.  Since $A$ is a homogeneous polynomial of degree $4$, then $A=\prod_{i=1}^{4}l_{i}$ with $deg(l_{i})=1$. If $l_{i}\notin \{y, a_{10}y+a_{01}z\}$ for some $i=1,2,3,4$, then $I_{0}(l_{i},f)=2$, and $I_{0}(f,g)=5$. 

Assume that $l_{i}\in \{y,a_{10}y+a_{01}z\}$ for some $i$. If we write $f$ as $f=f_{4}+f_{3}-y(a_{10}y+a_{01}z)$, where $f_{4}=y(a_{30}y^{3}+a_{21}y^{2}z+a_{12}yz^{2}+a_{03}z^{3})$, and $f_{3}=Q-y(a_{20}y^{2}+a_{11}yz+a_{02}z^{2})$. Then 
\begin{equation}
I_{0}(l_{i},f)=\left\{
   \begin{array}{lll}
     3 & \mbox{ if }  l_{i}\nmid f_{3},\\
     4 & \mbox{ if } l_{i}| f_{3} \mbox{ and } l_{i}\nmid f_{4}.
    \end{array}
\right.
\end{equation}

If $l_{i}=y$ for some $i$ and $b_{03}\neq 0$ then $y\nmid f$, thus $I_{0}(f,g)\leq 12$. 

We assume that $l_{i}=a_{10}y+a_{01}z$ for all $i=1,2,3,4$, which implies that  $A=(a_{10}y+a_{01}z)^{4}$. Thus  $I_{0}(l_{i},f)=I_{0}(l_{i}, f_{4}+f_{3})$. If $l_{i}\nmid f_{3}$ we have  $I_{0}(f,g)=9$. If $l_{i} |f_{3}$ and $l_{i}\nmid f_{4}$ we deduce that  $I_{0}(l_{i},f)=I_{0}(l_{i},f_{4})=4$. Thus $I_{0}(A,f)=16$ and $I_{0}(f,g)=13$. By the equality $A=(a_{10}y+a_{01}z)^{4}$ we have the conditions of the statement. 

With all these conditions we see that the $0$ lies in the interior of the convex hull defined by the weights of $X$ (see the diagram of weights in Figure \ref{Fig:Diagrama}). Thus $X$ is stable.
\end{proof}

In general, we have no a characterization for the elements in $\mathcal{A}_{3,2}^s$.

\begin{ex}\label{Ejemplo}
Let $X=P(x,y,z)\frac{\partial}{\partial x}+Q(x,y,z)\frac{\partial}{\partial y}
 +R(x,y,z)\frac{\partial}{\partial z}$ a foliation defined by the following polynomials
\begin{align*}
P(x,y,z)=& x^{2}y+ 2x^{2}z-8xy^{2}-16xyz-8xz^{2}+a_{30}y^{3}+a_{21}y^{2}z+a_{12}yz^{2}+a_{03}z^{3},\\
Q(x,y,z)=& -8y^{3}-24y^{2}z-32yz^{2}-16z^{3},\\
R(x,y,z)=&  y^{3}.
\end{align*}
where $a_{30},a_{21},a_{12},a_{03}\in\C^{*}$. Since  $[1:0:0]\in Sing(X)$ we consider  the local representation of $X$. To compute the intersection index $I_0(f,g)$ we consider the polynomial  $A=-zQ+yR$, then $I_{0}(g,f)=I_{0}(A,f)-I_{0}(y,f)$. Since  $A=(y+2z)^{4}$ we have that $I_{0}(A,f)=4I_{0}(y+2z,f)$.

We can write $f$ as $f=-y(a_{30}y^{3}+a_{21}y^{2}z+a_{12}yz^{2}+a_{0,3}z^{3})+(y+2z)(-8z(y+z)-y)$, then $I_{0}(y+2z,f)=4$ and $I_{0}(f,y)=I_{0}(z^3,y)=3$. Thus $I_{0}(f,g)=13$.
\end{ex}

\subsubsection{Case of Multiplicity 2; unstable}  In \cite{CR16} is constructed a stratification of $\F_3$ with 16 strata. In such stratification, the stratum 6 contains the set $\mathcal{A}_{3,3}^{un}$ and it is mentioned that the points of $A_{3,2}^{un}$ can appear in the stratum 15. The following result extends the characterization of the strata 15 given in \cite{CR16}.

\begin{lem} Lex $X$ be the following foliation
\[(b_{11}^{3}y^{3}+3b_{11}^{2}b_{02}y^{2}z+3b_{11}b_{02}^{2}yz^{2}+b_{02}^{3}z^{3})\frac{\partial }{\partial x} + (b_{11}xyz+b_{02}xz^{2}+b_{21}y^{2}z+b_{12}yz^{2}+b_{03}z^{3})\frac{\partial }{\partial y},
\]
where $b_{11}b_{02}\neq 0$ and $Res(b_{11}y+b_{02}z, b_{21}y^{2}+b_{12}yz+b_{03}z^{2})\neq 0$. Then $X\in \mathcal{A}_{3,2}^{un}$.
\end{lem}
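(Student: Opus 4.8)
The plan is to verify in turn the three conditions defining membership in $\mathcal{A}_{3,2}^{un}$: that $[1:0:0]$ is the only singular point, that it has multiplicity $2$, and that $X$ is unstable. The whole argument is organized around the factorizations that the coefficients of $X$ have been arranged to produce. First I would set $L:=b_{11}y+b_{02}z$ and $C:=b_{21}y^{2}+b_{12}yz+b_{03}z^{2}$, and observe that the first component factors as $P=(b_{11}y+b_{02}z)^{3}=L^{3}$, the second factors as $Q=z\left(xL+C\right)$, and $R=0$. The hypothesis $Res(b_{11}y+b_{02}z,\,b_{21}y^{2}+b_{12}yz+b_{03}z^{2})\neq 0$ is precisely the statement that $L$ and $C$ have no common projective zero, i.e.\ that $L\nmid C$; this is the only analytic input, and everything else is formal.

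For the singular locus I would use the defining condition $X(p)=\lambda p$. Since $R\equiv 0$, the third coordinate forces $\lambda z=0$. If $z\neq 0$ then $\lambda=0$, so $P=Q=0$; from $P=L^{3}=0$ we get $L=0$, and then $Q=z(xL+C)=0$ with $z\neq 0$ forces $C=0$, producing a common zero of $L$ and $C$ and contradicting the resultant hypothesis. If $z=0$ then $P(x,y,0)=b_{11}^{3}y^{3}$ and $Q(x,y,0)=0$, and the second coordinate of $X(p)=\lambda p$ gives $\lambda y=0$; since $b_{11}\neq 0$, the only surviving possibility is $y=0$, that is $p=[1:0:0]$. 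Hence $Sing(X)=\{[1:0:0]\}$. To read off the multiplicity I would pass to the local representation at this point, namely $f=Q(1,y,z)-yP(1,y,z)=zL+zC-yL^{3}$ and $g=-zP(1,y,z)=-zL^{3}$. The lowest-degree form of $f$ is $zL=b_{11}yz+b_{02}z^{2}$, which is nonzero because $b_{11}b_{02}\neq 0$, so $m=2$, while the lowest-degree form of $g$ is $-zL^{3}$ of degree $4$. Thus $m_{[1:0:0]}(X)=\min\{2,4\}=2$. Since $P$ and $Q$ have no common factor (the only candidate, $L$, does not divide $z(xL+C)$ because $L\nmid C$ and $L\nmid z$), the singularity is isolated, and a degree $3$ foliation with a single isolated singular point automatically has Milnor number $13$ by the degree formula of the earlier proposition; hence $X\in\mathcal{A}_{3,2}$.

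It then remains to prove instability, which is immediate from the list of monomials of $X$. Expanding $P$, the field contains $y^{3}\partial_{x},\,y^{2}z\,\partial_{x},\,yz^{2}\partial_{x},\,z^{3}\partial_{x}$, and from $Q$ it contains $xyz\,\partial_{y},\,xz^{2}\partial_{y},\,y^{2}z\,\partial_{y},\,yz^{2}\partial_{y},\,z^{3}\partial_{y}$. Each of these nine monomials appears among the generators of $V_{3}$ recorded in Subsection \ref{Stratificacion3}, so $X\in\P(V_{3})\subset \F_{3}^{un}$ and $X$ is unstable. Equivalently, with $x^{i}y^{j}z^{k}\partial_{x_{m}}$ carrying weight $(i,j,k)-e_{m}$, every one of these weights $(a,b,c)$ satisfies $-4a-b+5c>0$; since this linear functional vanishes on $(1,1,1)$, the origin fails to lie in the convex hull of the weights, and the Hilbert--Mumford criterion of Subsection \ref{Stratificacion3} (see Figure \ref{Fig:Diagrama}) yields instability. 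Combining the three parts gives $X\in\mathcal{A}_{3,2}^{un}$.

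The step I expect to be the main obstacle is the uniqueness of the singular point: the content of the lemma is concentrated there, since it is exactly the translation of $Res(L,C)\neq 0$ into ``$L$ and $C$ have no common zero,'' together with the use of $b_{11}\neq 0$ to eliminate the residual candidates on the line $z=0$. Once $Sing(X)=\{[1:0:0]\}$ is in hand, the multiplicity count is a routine inspection of lowest-degree forms, and the instability follows mechanically from the $V_{3}$ membership, so no further delicate estimate is required.
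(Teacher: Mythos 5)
Your proof is correct, and at the crux of the lemma it takes a genuinely different route from the paper's. The paper works at $p=[1:0:0]$ and computes the Milnor number there: with $L=b_{11}y+b_{02}z$ and $C=b_{21}y^{2}+b_{12}yz+b_{03}z^{2}$, the factorizations $P(1,y,z)=L^{3}$ and $Q(1,y,z)=z(L+C)$ together with intersection-index calculus give $I_{0}(f,g)=4+I_{0}(P,Q)$ and $I_{0}(P,Q)=9$ (the resultant hypothesis forces $I_{0}(L,C)=2$), hence $\mu_{p}=13$; uniqueness of the singular point then follows --- implicitly in the paper --- from the degree formula $\sum_{q}\mu_{q}(X)=13$, since this single point exhausts the total. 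You run the same two facts in the opposite order: you establish $Sing(X)=\{[1:0:0]\}$ set-theoretically from the condition $X(p)=\lambda p$ (the case $z\neq 0$ would produce a common zero of $L$ and $C$, excluded by the resultant; the case $z=0$ is killed by $b_{11}\neq 0$), and then the degree formula hands you $\mu_{p}=13$ with no intersection computation at all. Your route is more elementary and makes the role of $Res(L,C)\neq 0$ completely transparent, while the paper's route actually computes the local invariant $I_0(f,g)=13$, in the same style as its semistable and stable cases. On instability you are also more explicit than the paper: rather than only citing the weight diagram, you observe that every monomial of $X$ lies in $V_{3}$, so $X\in\P(V_{3})\subset\F_{3}^{un}$, and you exhibit a linear functional strictly separating the weights of $X$ from the origin; both are valid and have the added benefit of identifying the unstable component containing $X$. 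The one point to keep explicit is that invoking the degree formula requires isolated singularities, but this is supplied either by your coprimality remark ($L\nmid z(xL+C)$ since $L\nmid C$ and $L\nmid z$) or simply by the finiteness of $Sing(X)$ you had already proved, so your argument is complete as written.
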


\begin{proof}
 
 Let $p=[1:0:0]$ be the unique singular point of $X$. With the local representation $f$, $g$ of $X$ we have that $m_{p}(X)=2$. The intersection index between $f$ and $g$ at the origin  is $I_{0}(f,g)=4+I_{0}(P,Q)$. Since $Q=z(b_{11}y+b_{02}z+b_{21}y^{2}+b_{12}yz+b_{03}z^{2})$ and $P=(b_{11}y+b_{02}z)^{3}$ we have that $I_{0}(P,Q)=3+I_{0}(b_{11}y+b_{02}z,b_{21}y^{2}+b_{12}yz+b_{03}z^{2})=9$. We conclude that $I_{0}(f,g)=13$. Using the diagram of weights (see Figure \ref{Fig:Diagrama}), it is easy to see that with the given conditions, $X$ is an unstable foliation.
\end{proof}

 \bibliographystyle{alpha}
 \bibliography{StabilityFoliationsPetra}

\begin{thebibliography}{GMOB04}

\bibitem[Alc09]{AlcantaraGrado1}
Claudia~R Alc{\'a}ntara.
\newblock The good quotient of the semi-stable foliations of $\mathbb{CP}^{2}$
  of degree 1.
\newblock {\em Results in Mathematics}, 53(1):1--7, 2009.

\bibitem[Alc13]{AlcantaraGrado2}
Claudia~R Alc{\'a}ntara.
\newblock Foliations on $\mathbb{CP}^{2}$ of degree 2 with degenerate
  singularities.
\newblock {\em Bulletin of the Brazilian Mathematical Society, New Series},
  44(3):421--454, 2013.

\bibitem[Alc18]{A18}
Claudia~R Alc{\'a}ntara.
\newblock Foliations on $\mathbb{CP}^{2}$ of degree d with a singular point
  with milnor number $d^{2}+d+1$.
\newblock {\em Revista Matem{\'a}tica Complutense}, 31(1):187--199, 2018.

\bibitem[APM20]{AP19}
Claudia~R Alc{\'a}ntara and Rub{\'\i} Pantale{\'o}n-Mondrag{\'o}n.
\newblock Foliations on $\mathbb{CP}^{2}$ with a unique singular point without
  invariant algebraic curves.
\newblock {\em Geometriae Dedicata}, 207(1):193--200, 2020.

\bibitem[ARL16]{CR16}
Claudia~R Alc\'antara and Ramon Ronzon-Lavie.
\newblock Classification of foliations on $\mathbb{CP}^2$ of degree 3 with
  degenerate singularities.
\newblock {\em Journal of Singularities}, 14:52--73, 2016.

\bibitem[Bru15]{Brunella}
Marco Brunella.
\newblock {\em Birational geometry of foliations}.
\newblock Springer, 2015.

\bibitem[CDBM10]{CDGM}
Dominique Cerveau, Julie D{\'e}serti, D~Garba Belko, and Rafik Meziani.
\newblock G{\'e}om{\'e}trie classique de certains feuilletages de degr{\'e}
  deux.
\newblock {\em Bulletin of the Brazilian Mathematical Society, New Series},
  41(2):161--198, 2010.

\bibitem[CO01]{CO}
Antonio Campillo and Jorge Olivares.
\newblock Polarity with respect to a foliation and {C}ayley-{B}acharach
  theorems.
\newblock {\em Journal fur die Reine und Angewandte Mathematik}, 534:95--118,
  2001.

\bibitem[EM11]{EM2011}
Esteves Eduardo and Marina Marchisio.
\newblock Invariant theory of foliations of the projective plane.
\newblock {\em Rendiconti del Circolo Matematico di Parlerno}, Serie II,
  Suppl.(83):175--188, 2011.

\bibitem[FPR22]{L22}
Percy Fern{\'a}ndez, Liliana Puchuri, and Rudy Rosas.
\newblock Foliations on $\mathbb{CP}^{2}$ with only one singular point.
\newblock {\em Geometriae Dedicata}, 216(5):1--17, 2022.

\bibitem[GMK89]{GM-K}
Xavier G{\'o}mez-Mont and George Kempf.
\newblock Stability of meromorphic vector fields in projective spaces.
\newblock {\em Commentarii Mathematici Helvetici}, 64(1):462--473, 1989.

\bibitem[GMOB04]{GO}
Xavier G{\'o}mez-Mont and Laura Ortiz-Bobadilla.
\newblock Sistemas din{\'a}micos holomorfos en superficies (spanish), volume 3
  of.
\newblock {\em Aportaciones Matem{\'a}ticas: Notas de Investigaci{\'o}n}, 2004.

\bibitem[Jou06]{J06}
Jean-Pierre Jouanolou.
\newblock {\em Equations de Pfaff alg{\'e}briques}, volume 708.
\newblock Springer, 2006.

\bibitem[Kir84]{Kirwan}
Frances~Clare Kirwan.
\newblock {\em Cohomology of quotients in symplectic and algebraic geometry},
  volume~31.
\newblock Princeton University Press, 1984.

\bibitem[MKF94]{Mumford}
David Mumford, Frances Kirwan, and John Fogarty.
\newblock {\em Geometric invariant theory}.
\newblock Springer-Verlag, Berlin, 1994.

\bibitem[PMMdC]{PA}
Petra~Rubí Pantaleón-Mondragón and Abraham Martín~del Campo.
\newblock The {E}uler-{B}etti algorithm to identify foliations in the {H}ilbert
  scheme.
\newblock Preprint at \url{https://doi.org/10.48550/arXiv.2303.17698}.

\end{thebibliography}
\end{document}